\newtheorem{theorem}{Theorem}[section]
\newtheorem{lemma}[theorem]{Lemma}
\newtheorem{question}{Question}
\numberwithin{equation}{section}
\newenvironment{proof}[1][Proof]{\begin{trivlist}
\item[\hskip \labelsep {\bfseries #1}]}{\end{trivlist}}
\newcommand{\qed}{\hfill \ensuremath{\Box}}
\begin{document}

\thispagestyle{empty}
\title{\textbf{On Rainbow Cycles and Proper Edge Colorings of Generalized Polygons}}

\author{\textbf{Matt Noble} \\
Department of Mathematics and Statistics \\
Middle Georgia State University\\
matthew.noble@mga.edu}
\date{}
\maketitle

\begin{abstract}

An edge coloring of a simple graph $G$ is said to be \textit{proper rainbow-cycle-forbidding} (PRCF, for short) if no two incident edges receive the same color and for any cycle in $G$, at least two edges of that cycle receive the same color.  A graph $G$ is defined to be \textit{PRCF-good} if it admits a PRCF edge coloring, and $G$ is deemed \textit{PRCF-bad} otherwise.  In recent work, Hoffman, et al. study PRCF edge colorings and find many examples of PRCF-bad graphs having girth less than or equal to 4.  They then ask whether such graphs exist having girth greater than 4.  In our work, we give a straightforward counting argument showing that the Hoffman-Singleton graph answers this question in the affirmative for the case of girth 5.  It is then shown that certain generalized polygons, constructed of sufficiently large order, are also PRCF-bad, thus proving the existence of PRCF-bad graphs of girth 6, 8, 12, and 16.\\[5pt]    

\noindent \textbf{Keywords and phrases:} edge coloring, rainbow subgraph, Moore graph, generalized polygon 
\end{abstract}

\section{Introduction}

All graphs will be finite and simple.  The notation and terminology used will be for the most part standard, and for clarification, one may consult any introductory text.  A few graph parameters deserve special mention, as they will be central to our discussion.  For a graph $G$, define the \textit{girth} of $G$, notated as $g(G)$, to be the minimum length of a cycle in $G$.  For $a,b \in V(G)$, the \textit{distance} $d(a,b)$ is the minimum length of a path beginning at $a$ and terminating at $b$.  The \textit{diameter} of a connected graph $G$, which we will notate as $diam(G)$, is equal to the maximum distance taken over all pairs of vertices in $G$.  

For a graph $G$ and a set of colors $C$, let $\varphi: E(G) \to C$ be an edge coloring.  A subgraph $H$ of $G$ is said to be \textit{rainbow} (with respect to $\varphi$) if every edge of $H$ receives a different color.  Define $\varphi$ to be \textit{proper rainbow-cycle-forbidding} if no two incident edges of $G$ receive the same color (that is, $\varphi$ is proper), and no cycle in $G$, regardless of length, is rainbow.  Define a graph $G$ to be \textit{PRCF-good} if it admits a PRCF edge coloring, and deem $G$ to be \textit{PRCF-bad} otherwise.  Just to be clear, note that in a PRCF edge coloring of $G$, there is no stipulation on how many colors are actually used.  For $G$ to be PRCF-good, it may be that $G$ has a PRCF edge coloring using a number of colors equal to the chromatic index $\chi'(G)$, or it may be that $G$ has a PRCF edge coloring using some other number of colors.  However, for $G$ to be PRCF-bad, no matter the number of colors used in a proper edge coloring, $G$ is guaranteed to have at least one rainbow cycle.    

In recent work \cite{hoffmanetal}, Hoffman, et al. study PRCF edge colorings and produce numerous examples of PRCF-bad graphs -- some of these found by computer search, some generated through constructive methods, and some found through nothing more than pencil and paper efforts.  The complete graph $K_3$ is a trivial example of a PRCF-bad graph, as is any $G$ having $K_3$ as a subgraph.  It is fairly easy to see that the complete bipartite graph $K_{2,4}$ is also PRCF-bad, and we invite curious readers to take a moment to verify this fact for themselves.  A this point, it should be noted, however, that every PRCF-bad graph given in \cite{hoffmanetal} has girth equal to 3 or 4.  This begs the question, does there exist a PRCF-bad graph of girth 5?  More generally, do there exist a PRCF-bad graphs of arbitrarily large girth?

We will in our work prove that the Hoffman-Singleton graph answers the above question in the affirmative in the case of girth 5.  We will then show that the family of generalized polygons contains PRCF-bad graphs $G$ with $g(G) = y$ for each $y \in \{6,8,12,16\}$.  Such graphs are well-studied in the literature, and although their construction can be somewhat laborious to explicitly describe, our argument showing that a generalized polygon of sufficiently large order is indeed PRCF-bad employs nothing more than elementary counting techniques.  We will close with a few questions and observations which may hopefully guide future efforts to answer the above question in general.

\section{The Hoffman-Singleton Graph}

The Hoffman-Singleton graph was discovered by the eponymous A. J. Hoffman and R. R. Singleton and introduced in \cite{hoffmansingleton}.  Although it will not be fully described here, there are numerous sources detailing its construction (see \cite{godsil}, page 92 or see \cite{mathworld} for a visual depiction).  The graph is very well-studied and many of its parameters have been established.  Here, will we be concerned with the following aspects of the Hoffman-Singleton graph.  It has order 50, it is 7-regular, it has girth 5, and it has diameter 2.

The diameter and girth of the Hoffman-Singleton graph are important.  Let $G$ denote the Hoffman-Singleton graph, and consider the path $P_4$ appearing as a subgraph of $G$.  Let the vertex set of this path be $\{v_1,v_2,v_3,v_4\}$, labeled as in the diagram below.

\begin{center}
\includegraphics[scale=0.65]{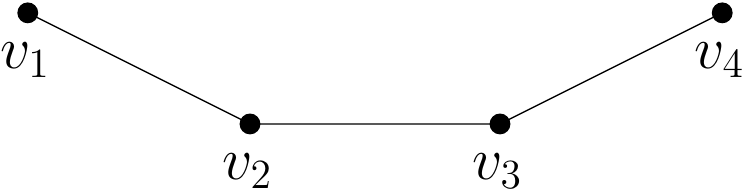}
\end{center}

Since $g(G) = 5$, $v_1v_4 \not \in E(G)$.  As $diam(G) = 2$, there exists $x \in V(G)$ adjacent to both $v_1$ and $v_4$.  Furthermore, this vertex $x$ must be unique, as the existence of another vertex $y$ adjacent to $v_1$ and $v_4$ would induce a 4-cycle in $G$ on vertices $v_1,x,v_4,y$.  Putting these observations together, we have that in $G$, every $P_4$ is a subgraph of exactly one 5-cycle.  This is, of course, a simple observation, but it will be fundamental to the argument that follows.    

 \begin{theorem} \label{hsgtheorem} The Hoffman-Singleton graph is PRCF-bad.
\end{theorem}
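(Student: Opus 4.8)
The plan is to argue by contradiction, using a double count of the paths on four vertices in the Hoffman--Singleton graph $G$ and exploiting the fact established above --- that every $P_4 \subseteq G$ lies in exactly one $5$-cycle --- in two different ways. First I would record the needed counts. Since $G$ is $7$-regular on $50$ vertices it has $175$ edges, and a direct count (pick $v_1$, then $v_2 \in N(v_1)$, then $v_3 \in N(v_2)\setminus\{v_1\}$, then $v_4 \in N(v_3)\setminus\{v_2\}$, where $g(G)=5$ automatically forces $v_1 \ne v_4$) gives $50\cdot 7\cdot 6\cdot 6 = 12600$ vertex sequences, hence exactly $6300$ copies of $P_4$ in $G$. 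Since each $5$-cycle contains exactly five $P_4$'s and each $P_4$ lies in exactly one $5$-cycle, counting incident pairs $(P_4, \text{$5$-cycle})$ shows $G$ has exactly $6300/5 = 1260$ five-cycles.

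Now suppose, for contradiction, that $\varphi$ is a PRCF edge coloring of $G$, with color classes (each a matching, since $\varphi$ is proper) of sizes $k_1,\dots,k_t$, so that $\sum_i k_i = 175$ and each $k_i \le 25$. Call a $P_4$ \emph{bad} if it is not rainbow. Because the middle edge of a $P_4$ is incident to both of its end edges, a $P_4$ is bad if and only if its two end edges receive the same color. I would then show that each monochromatic pair of edges $\{e,f\}$ is the pair of end edges of \emph{at most one} $P_4$: writing $e = ab$ and $f = cd$ (which are vertex-disjoint, as $\varphi$ is proper), the middle edge of any $P_4$ with end edges $e,f$ must be one of $ac, ad, bc, bd$, and the presence of any two of these would produce a triangle or a $4$-cycle, contradicting $g(G)=5$. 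Hence the number of bad $P_4$'s is at most $\sum_i \binom{k_i}{2}$, which, since $\binom{\cdot}{2}$ is convex and $k_i \le 25$ with $\sum_i k_i = 175 = 7\cdot 25$, is at most $7\binom{25}{2} = 2100$. Therefore $G$ has at least $6300 - 2100 = 4200$ rainbow $P_4$'s.

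On the other hand, each rainbow $P_4$ lies in a unique $5$-cycle, and that $5$-cycle is not rainbow (as $\varphi$ is PRCF), so two of its edges share a color and, by properness, are non-adjacent within the cycle. The three $P_4$'s obtained from this $5$-cycle by deleting one of its other three edges each contain this monochromatic pair and so are bad; thus every non-rainbow $5$-cycle contains at most $5 - 3 = 2$ rainbow $P_4$'s. Consequently $G$ has at most $2 \cdot 1260 = 2520$ rainbow $P_4$'s, contradicting the lower bound $4200$ obtained above. This contradiction shows that no PRCF edge coloring of $G$ exists.

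I expect the only step requiring genuine care --- the "hard part" --- to be the claim that a monochromatic pair of edges is the end-pair of at most one $P_4$; everything else is bookkeeping, and the numerical slack ($4200$ versus $2520$) is comfortable. It is exactly at that step that girth $5$, rather than merely the absence of triangles, is essential, which is the structural reason this counting argument succeeds for the Hoffman--Singleton graph.
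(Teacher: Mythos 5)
Your overall strategy is the same skeleton as the paper's: double-count the $6300$ copies of $P_4$ against the $1260$ five-cycles, bounding the rainbow $P_4$'s from below using properness and from above using the fact that every $P_4$ extends to a unique $5$-cycle. Your preliminary counts are right, and your lower bound of $4200$ rainbow $P_4$'s is correct and nicely argued --- the observation that a monochromatic pair of edges is the end-pair of at most one $P_4$ (else girth $\leq 4$), combined with $\sum_i \binom{k_i}{2} \leq 2100$ by convexity, is sound.

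The gap is in the final step. A $P_4$ here is a path on four \emph{vertices}, hence with three edges; the five $P_4$'s inside a $5$-cycle are obtained by deleting a \emph{vertex} (equivalently, two adjacent edges), not by deleting a single edge, which would leave a four-edge path on five vertices. If $e$ and $f$ are the two non-adjacent, same-colored edges of a non-rainbow $5$-cycle $v_1v_2v_3v_4v_5$, say $e = v_1v_2$ and $f = v_3v_4$, then exactly \emph{one} of the five sub-$P_4$'s contains both, namely $v_1v_2v_3v_4$ --- not three. So a single monochromatic pair forces only one bad $P_4$ per cycle, and the honest version of your bound is ``at most $4$ rainbow $P_4$'s per $5$-cycle,'' i.e.\ at most $5040$ in total, which does not contradict $4200$. (Indeed, a proper coloring of a $C_5$ assigning colors $1,2,1,3,4$ to consecutive edges leaves four of its five $P_4$'s rainbow, so $4$ is attained.) To close the argument you need a sharper upper bound on the number of bad $P_4$'s than $2100$; the paper gets $1050$ by counting locally at each edge $e = ab$: of the $36$ $P_4$'s having $e$ as middle edge, at most $6$ are bad, since for each of the six colors on the other edges at $a$ there is at most one edge at $b$ with that color. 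Since $1050 < 1260 \leq$ the number of bad $P_4$'s (one per $5$-cycle), that version of the count does produce the contradiction.
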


\begin{proof} Let $G$ be the Hoffman-Singleton graph, and suppose to the contrary that a PRCF edge coloring $\varphi$ has been implemented on $G$.  Let $S$ be the set of all $P_4$'s in $G$, and partition $S$ into subsets $R$ and $T$ where $R$ contains all $P_4$'s whose three edges are colored with three different colors, and $T$ contains all $P_4$'s whose three edges are colored with only two colors.  In other words, with respect to $\varphi$, $R$ is the set of rainbow $P_4$'s and $T$ is the set of non-rainbow $P_4$'s.  Our goal is to now find lower and upper bounds on the order of $T$.

Consider a subgraph $H$ of $G$, where $H$ is isomorphic to the cycle $C_5$.  Two edges of $H$ must receive the same color under $\varphi$, and it follows that at least one of the five $P_4$'s that are subgraphs of $H$ is an element of $T$.  As previously seen, each $P_4$ in $G$ can be extended into exactly one 5-cycle in $G$.  This means that at least one out of every five elements of $S$ is an element of $T$, giving us a lower bound of $\left(\frac15\right)|S| \leq |T|$.

Now let $e = ab$ be an arbitrary edge of $G$.  As $G$ is 7-regular, let $c_1, \ldots, c_6$ be the other six edges of $G$ having endpoint $a$, and $d_1, \ldots, d_6$ be the other six edges of $G$ having endpoint $b$.  Without loss of generality, suppose $\varphi(e) = 0$ and $\varphi(c_i) = i$ for each $i \in \{1, \ldots, 6\}$ as is done in the figure below. 

\begin{center}
\includegraphics[scale=0.65]{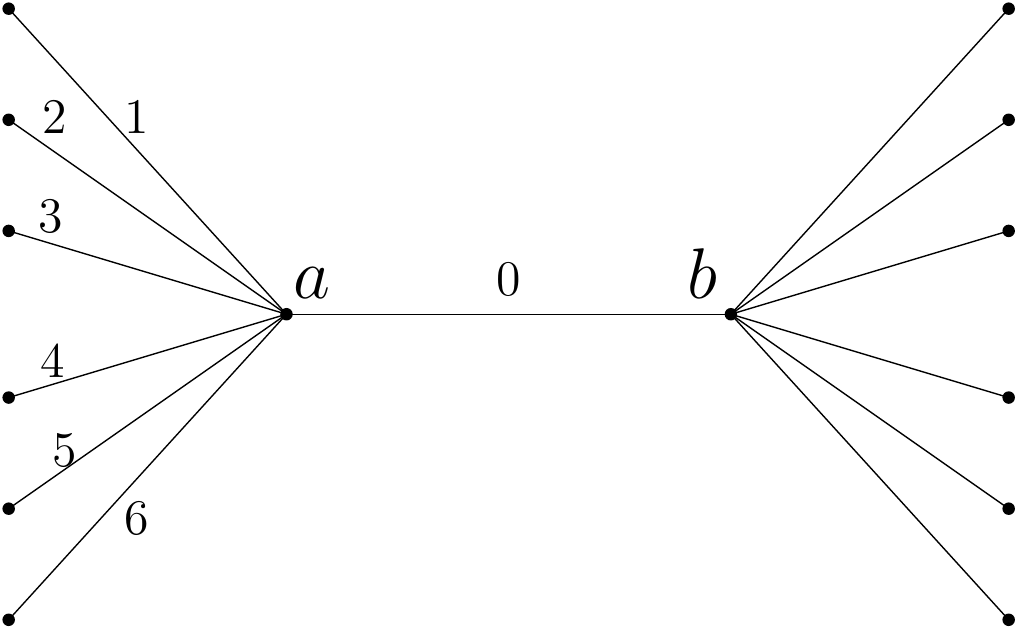}
\end{center}  

\noindent There are 36 distinct $P_4$'s in $G$ having $e$ as the ``middle" edge.  For each $i \in \{1, \ldots, 6\}$, there is at most one $j \in \{1, \ldots, 6\}$ such that $\varphi(d_j) = i$, which means that at most 6 of those 36 $P_4$'s are elements of $T$.  As $e$ was arbitrary, we have the upper bound $|T| \leq \left(\frac16\right)|S|$.

Combining the lower and upper bounds established for $|T|$, we have $\left(\frac15\right)|S| \leq |T| \leq \left(\frac16\right)|S| \implies \frac15 \leq \frac16$.  From this contradiction, we conclude that $G$ has no PRCF edge coloring.\qed
\end{proof} 

Define a PRCF-bad graph to be \textit{critical} if all of its proper subgraphs are PRCF-good.  We close in this section by observing that the Hoffman-Singleton graph is not critical.  Again, let $G$ denote the Hoffman-Singleton graph, and for some $v \in V(G)$, let $G' = G \setminus \{v\}$.  As is done in the proof of Theorem \ref{hsgtheorem}, suppose that a PRCF edge coloring $\varphi$ has been implemented on $G'$, and let $S'$ be the set of all $P_4$'s in $G'$, with $T' \subset S'$ the set of all $P_4$'s which are non-rainbow with respect to $\varphi$.  

As $G$ is a 7-regular graph of order 50, we have $|E(G)| = 175$, and since each edge of $G$ is the ``middle" edge of 36 distinct $P_4$'s, it follows that $|S| = 36(175) = 6300$.  It was previously seen that each $P_4$ of $G$ appears as a subgraph of exactly one 5-cycle of $G$, so $G$ contains $\frac{6300}{5} = 1260$ distinct 5-cycles.  It is known that $G$ is vertex-transitive, so from this, we conclude that, in $G$, vertex $v$ appears in $\frac{5(1260)}{50} = 126$ distinct 5-cycles.  This gives us a lower bound of $1260 - 126 = 1134 \leq |T'|$.  From the proof of Theorem \ref{hsgtheorem}, we had $|T| \leq \left(\frac16\right)|S| = \left(\frac16\right)\left(6300\right) = 1050$, and this upper bound for $|T|$ in $G$ is also an upper bound for $|T'|$ in $G'$.  Combining our bounds into one inequality, we have the contradiction $1134 \leq |T'| \leq 1050$. 

We call attention to the fact that the Hoffman-Singleton graph is not critically PRCF-bad, because ultimately, we were unable to decide whether or not there exists a critically PRCF-bad graph of girth 5.  It seems extremely unlikely that, in forming a critically PRCF-bad subgraph of the Hoffman-Singleton graph, one would be forced to delete so many edges and vertices that the resulting graph $H$ had girth $g(H) \geq 6$, and we would be interested in seeing an argument resolving this point.

\section{Generalized Polygons}

Graphs having diameter $d$ and girth $2d+1$ (like the Hoffman-Singleton graph where $d=2$) are called \textit{Moore graphs}.  This relationship between diameter and girth was particularly convenient for the proof of Theorem \ref{hsgtheorem} as it allowed us to immediately find a lower bound on $|T|$.  Moore graphs have been extensively studied in the literature (see \cite{miller} for a historical perspective), but unfortunately, they are quite rare.  In light of Theorem \ref{hsgtheorem}, we now desire to find PRCF-bad graphs of girth larger than 5, and the only Moore graphs fitting that bill are odd cycles $C_n$, which for $n > 3$ are PRCF-good.  We now instead turn our attention to the family of graphs known as \textit{generalized polygons}.

A generalized polygon is a graph $G$ where $diam(G) = d$ and $g(G) = 2d$.  In such graphs, a vertex $v$ is referred to as being \textit{thick} if $\deg(v) \geq 3$ and \textit{thin} otherwise.  A \textit{thick generalized polygon} is one in which every vertex is thick.  Like Moore graphs, generalized polygons are well-studied, and for background given from a graph-theoretic standpoint, we recommend \cite{godsil}.  Theorem \ref{feithigmantheorem} is the celebrated result of Feit and Higman showing the possible $d$ for which a thick generalized polygon exists.

\begin{theorem} \label{feithigmantheorem} Let $G$ be a thick generalized polygon of diameter $d$.  Then $d \in \{2,3,4,6,8\}$.  
\end{theorem}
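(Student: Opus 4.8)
The plan is to prove this the way Feit and Higman did, by a spectral and number-theoretic analysis of the parameters of $G$; at bottom the statement asserts that a system of Diophantine conditions forced on any thick generalized $d$-gon can be satisfied only when $d \in \{2,3,4,6,8\}$. (Note $d \ge 2$ is automatic, since a simple graph has girth at least $3$; the real content is to exclude $d = 5$, $d = 7$, and every $d \ge 9$.)

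First I would record the structure theory of $G$. Recall (see \cite{godsil}) that a thick generalized $d$-gon is bipartite and biregular: there are integers $s, t \ge 2$ so that one side of the bipartition -- the ``points'' -- consists of vertices of degree $t+1$, the other side -- the ``lines'' -- of vertices of degree $s+1$, and the numbers of points and of lines, together with the sizes of all distance-spheres in $G$, are determined by $s$, $t$ and $d$. Consequently $G$ is a bipartite distance-regular graph (more precisely, distance-biregular when $s \ne t$), its point-collinearity graph $\Gamma$ -- points, adjacent exactly when at distance $2$ in $G$ -- is an ordinary distance-regular graph of diameter $\lfloor d/2 \rfloor$, and one may bring to bear the full apparatus of eigenvalues, multiplicities and intersection numbers of distance-regular graphs.

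Next I would compute the spectrum. The bipartiteness of $G$, together with the fact that the girth only begins to affect counting at length $2d$, makes the three-term recurrence governing the spectrum of $G$ a Chebyshev-type one, so that the eigenvalues are expressible through cosines of rational multiples of $\pi$. Concretely: apart from the trivial eigenvalues $\pm\sqrt{(s+1)(t+1)}$ and possibly $0$, the eigenvalues of the adjacency matrix of $G$ have the form $\pm\sqrt{\,s + t + 2\sqrt{st}\,\cos(2j\pi/d)\,}$ for $j = 1, \ldots, \lfloor (d-1)/2 \rfloor$, and in particular are algebraic integers lying in $\mathbb{Q}\!\left(\sqrt{st},\,\cos(2\pi/d)\right)$. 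Each eigenvalue $\theta$ carries a multiplicity $m_\theta$ given by the standard rational formula for a distance-regular graph -- a rational function of $s$, $t$ and the relevant cosines -- and every $m_\theta$ must be a positive integer; moreover $\sum_\theta m_\theta = |V(G)|$ and, for each $\ell$, $\sum_\theta m_\theta\, \theta^{2\ell}$ equals the number of closed walks of length $2\ell$ in $G$, an integer determined by $s$, $t$ and $d$.

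The crux -- and the genuinely hard part, which is essentially the entire content of the theorem -- is to convert ``every $m_\theta$ is a positive integer'' into ``$d \in \{2,3,4,6,8\}$''. Because $2\cos(2j\pi/d)$ is an algebraic integer whose Galois conjugates over $\mathbb{Q}$ are again numbers of the form $2\cos(2j'\pi/d)$, and because the whole multiset of multiplicities is rational, one sums the $m_\theta$ over each Galois orbit and combines this with the trace identities above; demanding that the irrational contributions carried by $\sqrt{st}$ and by the cyclotomic integers $\cos(2j\pi/d)$ cancel across conjugate families yields Diophantine relations among $s$, $t$ and $d$. One then checks, case by case, that for $d = 5$, for $d = 7$, and for every $d \ge 9$ these relations are incompatible with $s, t \ge 2$ -- they force $\min(s,t) = 1$, contradicting thickness -- while for $d \in \{2,3,4,6,8\}$ they are consistent, imposing only side conditions (for instance $st$ a perfect square when $d = 6$, and $2st$ a perfect square when $d = 8$). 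Organising this last analysis -- controlling the arithmetic of the fields $\mathbb{Q}(\cos(2\pi/d))$ and the many divisibility and congruence conditions that arise -- is where essentially all the work resides; by comparison the structural and spectral set-up of the earlier steps is routine.
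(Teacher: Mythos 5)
First, a point of comparison: the paper does not prove this statement at all --- it is the Feit--Higman theorem, quoted as known and attributed to \cite{feithigman}, so there is no ``paper's own proof'' to match your argument against. Your proposal correctly identifies the strategy of the original proof: the incidence structure of a thick generalized $d$-gon forces a bipartite, biregular, distance-regular-type graph whose eigenvalues are expressible through $\cos(2j\pi/d)$ and $\sqrt{st}$, and the requirement that the eigenvalue multiplicities be positive integers is what kills all $d$ outside $\{2,3,4,6,8\}$. That framing is accurate, and the structural/spectral set-up you describe is essentially right.

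The gap is that the decisive step is asserted rather than performed. You write that one ``then checks, case by case, that for $d=5$, for $d=7$, and for every $d\ge 9$ these relations are incompatible with $s,t\ge 2$,'' and you yourself flag this as ``where essentially all the work resides'' --- but a proof cannot defer essentially all the work. Two concrete problems follow. First, you never write down the multiplicity formula $m_\theta$ whose integrality is to be tested, so there is no object on which the claimed Diophantine analysis can be run; the entire content of the theorem lives in that formula and in the verification that it fails to be an integer. Second, ``case by case for every $d\ge 9$'' is not an argument, since there are infinitely many cases: the actual proof needs a uniform mechanism, namely that the multiplicities must lie in a field of degree at most $2$ over $\Q$ (generated by $\sqrt{st}$), whereas for $d\notin\{1,2,3,4,6\}$ the number $\cos(2\pi/d)$ is irrational and $\Q(\cos(2\pi/d))$ has degree $\varphi(d)/2$, which exceeds what the rationality constraints permit except in the short list of surviving values (with $d=8$ surviving only under the side condition that $2st$ is a square, and $d=12$ being excluded by a separate parameter argument in the geometric normalization). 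As written, your proposal is a correct roadmap to the Feit--Higman proof, not a proof; to complete it you would either have to supply the multiplicity computation and the cyclotomic degree argument in full, or do what the paper does and simply cite \cite{feithigman}.
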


We will focus our discussion on two generalized polygons.  The \textit{split Cayley hexagon} is bipartite, has diameter 6 and girth 12, and is regular of degree $r$ where $r-1$ can be taken to be any prime power.  The Ree-Tits octagon is bipartite with diameter 8 and girth 16.  It is semiregular, and letting $(A,B)$ be the bipartition of its set of vertices, all vertices in $A$ have degree $q+1$ and all vertices of $B$ have degree $q^2+1$ where $q$ can be taken as any odd power of 2.  In Theorems \ref{hexagon} and \ref{octagon}, we will show that each of these graphs, when $r$ and $q$ are made sufficiently large, are PRCF-bad.  Each proof will have the same outline as the proof of Theorem \ref{hsgtheorem}.

\begin{theorem} \label{hexagon} A sufficiently large split Cayley hexagon is PRCF-bad.
\end{theorem}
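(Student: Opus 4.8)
The plan is to mimic the proof of Theorem \ref{hsgtheorem} closely. Let $G$ be a split Cayley hexagon that is regular of degree $r$, where $r-1$ is a prime power; so $\mathrm{diam}(G) = 6$ and $g(G) = 12$. Suppose toward a contradiction that $G$ carries a PRCF edge coloring $\varphi$. The role played by $P_4$ in the pentagon argument should now be played by $P_7$ (a path on $7$ vertices, $6$ edges) — the ``half'' of a $12$-cycle. Let $S$ be the collection of all $P_7$'s appearing as subgraphs of $G$, and split $S = R \cup T$ where $T$ consists of those $P_7$'s that are non-rainbow with respect to $\varphi$. As before, the goal is to bound $|T|$ from below and above and derive a numerical impossibility once $r$ is large.

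For the lower bound, I would first establish the structural analogue of the ``every $P_4$ lies in a unique $C_5$'' fact: in $G$, every $P_7$ is contained in exactly one $12$-cycle. This follows from $g(G) = 12$ together with $\mathrm{diam}(G) = 6$ — the two endpoints $v_1, v_7$ of a $P_7$ are at distance exactly $6$ (any shorter path would, together with the six edges of the $P_7$, create a cycle of length less than $12$), and a shortest path between two vertices at distance equal to the girth$/2$ in a generalized polygon is unique (two such paths would bound a cycle of length $\le 12$ sharing only the endpoints, forcing the configuration to be exactly one $12$-cycle; more carefully, distinctness of the internal vertices must be checked, but any coincidence produces a shorter cycle). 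Granting this, each $C_{12}$ subgraph of $G$ contains $12$ distinct $P_7$-subgraphs, each lying in that $C_{12}$ and no other; and since $\varphi$ is PRCF, each $C_{12}$ contains two edges of the same color, hence at least one of its twelve $P_7$'s is non-rainbow. Since the $P_7$'s partition (via their unique extension) according to which $C_{12}$ they sit in, this yields $|T| \ge \tfrac{1}{12}|S|$. (It is worth double-checking whether a single $C_{12}$ could be forced to contribute several non-rainbow $P_7$'s, which would only help; the bound $\tfrac{1}{12}|S|$ is safe regardless.)

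For the upper bound, fix an arbitrary edge $e = ab$ of $G$. Because $\varphi$ is proper, the $r-1$ non-$e$ edges at $a$ receive $r-1$ distinct colors, and likewise at $b$; but a non-rainbow $P_7$ with $e$ as a specified interior edge requires a color repetition somewhere among its six edges, and I would argue — extending the local counting in the proof of Theorem \ref{hsgtheorem} — that the fraction of $P_7$'s through $e$ that can be non-rainbow is bounded by something like $O(1/r)$ (or more precisely a ratio that tends to $0$ as $r \to \infty$): the dominant count of $P_7$'s through $e$ grows like a fixed power of $r$ in the ``free'' directions, while the number made non-rainbow by a forced color coincidence is smaller by a factor of $r$. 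Summing over all edges $e$ (each $P_7$ has a bounded number of interior edges, so this over-counts by a constant) gives $|T| \le \tfrac{c}{r}|S|$ for an absolute constant $c$. Then $\tfrac{1}{12}|S| \le |T| \le \tfrac{c}{r}|S|$ forces $r \le 12c$, contradicting ``sufficiently large,'' and we conclude.

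The main obstacle is the upper-bound step: getting a clean, honest count of how many $P_7$'s through a fixed edge $e$ are non-rainbow. In the $C_5$ case each $P_4$ had $e$ as a well-defined ``middle'' edge and the count $36 = 6^2$ versus $6$ was exact and elementary; for $P_7$ the path has several interior edges, and a color repetition can occur at various positions along the path, so one must carefully organize the count — perhaps by fixing not just $e$ but a longer central sub-path, or by summing a geometric-type series over the possible locations of the repeated color — to verify that the non-rainbow fraction really does decay like $1/r$ rather than merely staying bounded below $1$. A secondary (but genuinely necessary) technical point is pinning down the exact value or at least the growth rate of $|S|$ itself, i.e. the number of $P_7$'s in the hexagon of order $r$, which requires knowing that neighborhoods in $G$ branch without creating short cycles out to radius $6$ — again a consequence of $g(G) = 12$, but one that should be stated explicitly so that the ratios in the two bounds are comparing the same quantity. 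Once those counts are in hand, the contradiction is immediate, and the identical scheme with $P_9$ and $C_{16}$ (accounting for the semiregularity $q+1$ versus $q^2+1$) will drive Theorem \ref{octagon}.
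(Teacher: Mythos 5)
There is a genuine gap, and it is located exactly where you chose $P_7$ (six edges, half of a $12$-cycle) as the object to count. Two things go wrong. First, the structural claim that every $P_7$ extends to exactly one $12$-cycle is false: the endpoints $v_1, v_7$ of a $P_7$ are indeed at distance $6$, but two distinct paths of length $6$ between them close up into a cycle of length at most (hence exactly) $12$, which does not contradict $g(G)=12$ --- your parenthetical argument treats this as a contradiction when it is merely the creation of another $12$-cycle. In fact each neighbor $u$ of $v_1$ satisfies $d(u,v_7)=5$ by parity and the diameter bound, that length-$5$ geodesic is unique, and so there are exactly $r$ paths of length $6$ from $v_1$ to $v_7$; a given $P_7$ therefore lies in $r-1$ distinct $12$-cycles, not one. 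Second, and fatally for the lower bound even if the incidence count were repaired: a $12$-cycle with a repeated color need not contain any non-rainbow $P_7$. The two like-colored edges may be antipodal on the cycle (positions $i$ and $i+6$), and no window of six consecutive edges of a $12$-cycle contains two edges six apart; for instance the proper coloring $1,2,3,4,5,6,1,2,3,4,5,6$ around a $12$-cycle makes every one of its twelve $P_7$'s rainbow. So the inequality $|T| \ge \frac{1}{12}|S|$ does not follow from what you have.

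The paper avoids both problems by working with $P_8$ (seven edges) instead. There the endpoints are at distance exactly $5$ (odd by bipartiteness, at most $6$ by the diameter, at least $5$ by the girth), the length-$5$ geodesic is unique, so every $P_8$ lies in exactly one $12$-cycle; and any two edges of a $12$-cycle lie together in at least two windows of seven consecutive edges, so each $12$-cycle contributes at least two non-rainbow $P_8$'s, giving $|T|/|S| \ge 1/6$. For the upper bound --- the step you flag as the main obstacle --- the paper does not try to bound the non-rainbow paths through a fixed edge; it instead lower-bounds the number of rainbow paths $|R|$ directly by growing a path greedily from a vertex, noting that at each extension only a bounded number of the $r-1$ available continuations are forbidden by a color already used, which yields $|R| \ge \frac12|V(G)|\,r(r-1)(r-2)\cdots(r-6)$ and hence $|T|/|S| \le 1 - \frac{(r-2)(r-3)(r-4)(r-5)(r-6)}{(r-1)^5} \to 0$. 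Your overall two-bound strategy is the right one, but the path must have $g(G)/2 + 1$ edges (so $P_{10}$, not $P_9$, for the Ree--Tits octagon) for either bound to be provable.
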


\begin{proof} Let $G$ be a split Cayley hexagon which is regular of degree $r$.  Assume that a PRCF edge coloring $\varphi$ has been implemented on $G$.  Consider a path $P_8$ with vertices labeled as below.

\begin{center}
\includegraphics[scale=0.65]{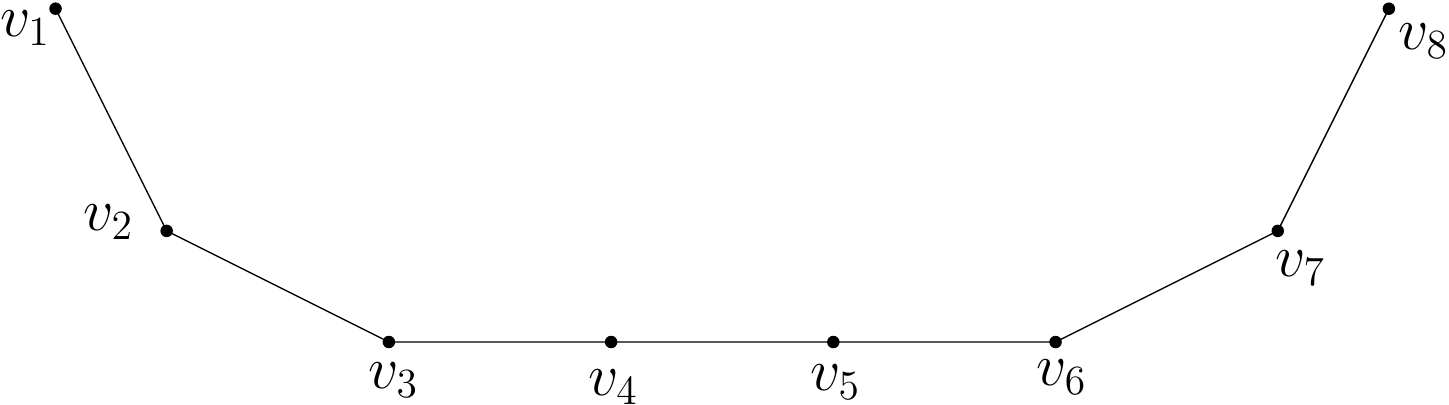}
\end{center}

Since $diam(G) = 6$, there exists a path $P$ in $G$ beginning at $v_8$, terminating at $v_1$, and having length less than or equal to 6.  Note that $P$ cannot contain any vertex $v_i$ for $i \in \{2, \ldots, 7\}$, and as well, $P$ cannot be of length less than or equal to 4, seeing as either of these characteristics would violate $g(G) = 12$.  Also, this path $P$ cannot have length 6 because $G$ is bipartite, and such a path would produce an odd cycle in $G$.  We conclude that $P$ has length exactly equal to 5.  Furthermore, it must be the case that $P$ is unique, as two distinct paths of length 5, each beginning at $v_8$ and terminating at $v_1$, would imply $g(G) \leq 10$.  From these observations, we conclude that every $P_8$ in $G$ is a subgraph of exactly one 12-cycle.

Let $S$ be the set of all $P_8$'s in $G$.  Let $R \subset S$ be the set of all $P_8$'s that are rainbow with respect to $\varphi$, and let $T = S \setminus R$ be the set of all $P_8$'s that are not rainbow.  Our goal is to now find lower and upper bounds for $\frac{|T|}{|S|}$ -- that is, the proportion of $P_8$'s in $G$ that are not rainbow.  As $\varphi$ is a PRCF edge coloring, each 12-cycle in $G$ must have at least two edges colored the same color.  For any two edges of $C_{12}$, there are at least two subgraphs of that $C_{12}$ which are isomorphic to $P_8$ and contain those two edges.  From the previous observation that every $P_8$ in $G$ is a subgraph of exactly one 12-cycle, we conclude that at least two out of every twelve $P_8$'s in $G$ are elements of $T$.  Thus $\frac16 \leq \frac{|T|}{|S|}$.

In the proof of Theorem \ref{hsgtheorem}, we found an upper bound for $|T|$ directly.  Here, it is easier to first determine $|S|$, and then find a lower bound for $|R|$, which  will suffice for our argument since $|T| = |S| - |R|$.  For any $v \in V(G)$, the number of paths having length 7 and beginning at $v$ is given by $r(r-1)^6$.  It follows that $|S| = \left(\frac12\right)|V(G)|r(r-1)^6$.  Note that the presence of the coefficient $\frac12$ is to avoid counting each $P_8$ twice.

To find a lower bound for $|R|$, again suppose $v \in V(G)$, and let us think about forming a rainbow $P_8$ starting at $v$.  There are $r$ neighbors of $v$, and any of them may be chosen to be on the path.  For any vertex $x_1$ in the open neighborhood $N(v)$, there are $r-1$ vertices in $N(x_1)\setminus\{v\}$ that may be chosen to continue the path.  For any vertex $x_2 \in N(x_1)\setminus\{v\}$, there are $r-1$ vertices in $N(x_2)\setminus\{x_1\}$, but it may be the case that for some $y \in N(x_2)\setminus\{x_1\}$, $\varphi(vx_1) = \varphi(x_2y)$.  This means that, from $x_2$, there are at least $r-2$ vertices to extend the rainbow path.  Continuing in this fashion, we find a lower bound $|R| \geq \left(\frac12\right)|V(G)|r(r-1)(r-2)(r-3)(r-4)(r-5)(r-6)$.  Note that the coefficient $\frac12$ prevents double counting, just as it did in our formula for $|S|$.

We now have the following upper bound for $|T|$.

\begin{equation}
|T| \leq |S| - \left(\frac12\right)|V(G)|r(r-1)(r-2)(r-3)(r-4)(r-5)(r-6)
\label{eq1}
\end{equation}

Now divide each side of Inequality \ref{eq1} by $|S|$ and simplify the right-hand side through use of our earlier expression for $|S|$.  Combining the result with our lower bound for $\frac{|T|}{|S|}$, we arrive at Inequality \ref{eq2}.

\begin{equation}
\frac16 \leq \frac{|T|}{|S|} \leq 1 - \frac{(r-2)(r-3)(r-4)(r-5)(r-6)}{(r-1)^5}
\label{eq2}
\end{equation} 

As $r \to \infty$, the expression on the right-hand side of Inequality \ref{eq2} goes to 0.  In constructing a split Cayley hexagon, our only restriction on the selection of $r$ is that $r-1$ must be a power of a prime, so we are free to select $r$ arbitrarily large and establish a contradiction.\qed
\end{proof}

In the statement of Theorem \ref{hexagon}, we did not say how large the order of a split Cayley hexagon must be to actually have the graph be PRCF-bad.  It turns out that the smallest value of $r$ which results in the contradiction being present in the proof above -- that is, for the right-hand side of Inequality \ref{eq2} to dip below $\frac16$ -- is $r = 90$.  The order of a 90-regular graph $G$ with $diam(G) = 6$ and $g(G) = 12$ is given by the expression $\displaystyle 1 + \sum_{i=0}^{5} 90(89)^i = 508,276,320,301$.  Quite large indeed!

We omit proofs showing that PRCF-bad graphs exist among the families of generalized polygons of girth 6 and 8 as they proceed similarly to the proof of Theorem \ref{hexagon}.  We will, however, formally show that there exists a PRCF-bad graph of girth 16.  Theorem \ref{octagon} will also follow the same outline as Theorem \ref{hexagon}, but since the Ree-Tits octagon is not regular, the counts that are involved are carried out slightly differently.

\begin{theorem} \label{octagon} A sufficiently large Ree-Tits octagon is PRCF-bad.  
\end{theorem}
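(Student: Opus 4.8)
The plan is to re-run the proof of Theorem~\ref{hexagon} with $P_8$ replaced by $P_{10}$ and $C_{12}$ by $C_{16}$, the extra work being only the bookkeeping forced by the two distinct vertex degrees. So let $G$ be a Ree-Tits octagon with bipartition $(A,B)$, every vertex of $A$ having degree $q+1$ and every vertex of $B$ degree $q^2+1$, and suppose a PRCF edge coloring $\varphi$ of $G$ has been fixed. First I would record the structural fact, obtained exactly as in Theorem~\ref{hexagon}: given a $P_{10}$ on vertices $v_1,\dots,v_{10}$, a shortest path from $v_{10}$ to $v_1$ has odd length (the two ends lie in different parts, being joined by the $9$-edge path $v_1\cdots v_{10}$), has length at most $diam(G)=8$, and cannot have length $1$, $3$, or $5$ without creating a cycle of length at most $14<16=g(G)$; hence it has length exactly $7$, and it is unique since two such would again create a cycle of length at most $14$. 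Consequently every $P_{10}$ of $G$ is a subgraph of exactly one $16$-cycle. Let $S$ denote the set of all $P_{10}$'s of $G$, let $R\subseteq S$ be the rainbow ones, and let $T=S\setminus R$.

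Next I would obtain the lower bound for $\frac{|T|}{|S|}$. Since $\varphi$ is PRCF, every $16$-cycle has two edges of equal color; any two edges of a $C_{16}$ lie together in at least two of its sixteen $P_{10}$-subgraphs (of the sixteen runs of nine consecutive edges, at least $9+9-16=2$ contain both); and the $P_{10}$'s of $G$ are partitioned---sixteen to a cycle---by their unique enclosing $16$-cycle. So at least two out of every sixteen $P_{10}$'s belong to $T$, giving $\frac18\le\frac{|T|}{|S|}$.

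Then I would compute $|S|$ exactly and bound $|R|$ from below. A $P_{10}$ has nine edges, so exactly one of its endpoints lies in $A$; hence $|S|$ is the number of non-backtracking walks of length $9$ starting in $A$, and in girth $16$ every such walk is automatically a path. Counting vertex by vertex along the alternating degree pattern $q+1,q^2+1,q+1,\dots$ gives $|S|=|A|(q+1)q^{12}$. For $|R|$, I would build a rainbow path from a vertex of $A$: at the $k$-th edge, properness already forbids repeating the previous edge's color, and each of the other $k-2$ colors used so far kills at most one candidate neighbor, leaving at least $\deg(x_{k-1})-(k-1)$ ways to continue; multiplying along the same alternating degree pattern yields
\[
|R|\ \ge\ |A|\,(q+1)\,q^{2}\,(q-1)\,(q^{2}-2)\,(q-3)\,(q^{2}-4)\,(q-5)\,(q^{2}-6)\,(q-7),
\]
whence
\[
\frac{|T|}{|S|}\ =\ 1-\frac{|R|}{|S|}\ \le\ 1-\frac{q^{2}(q-1)(q^{2}-2)(q-3)(q^{2}-4)(q-5)(q^{2}-6)(q-7)}{q^{12}}.
\]
The subtracted term is a monic polynomial of degree $12$ in $q$ divided by $q^{12}$, so the right-hand side tends to $0$ as $q\to\infty$. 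Because $q$ may be taken to be an arbitrarily large odd power of $2$, for $q$ sufficiently large the right-hand side drops below $\frac18$, contradicting the lower bound; hence such a $G$ has no PRCF edge coloring.

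The one genuine obstacle over Theorem~\ref{hexagon} is keeping the semiregular data consistent: making sure the walk from an $A$-vertex actually meets degrees $q+1,q^2+1,q+1,\dots$ in that order, that the losses $-(k-1)$ attach to the correct factors in the bound for $|R|$, and that it is legitimate to count each $P_{10}$ once from its $A$-endpoint (instead of twice, from both ends, and halving as in the regular case) precisely because a $P_{10}$ has a unique endpoint in $A$. Once that is in order, the cancellation of $|A|(q+1)$ in $\frac{|R|}{|S|}$ and the resulting contradiction are immediate.
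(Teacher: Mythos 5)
Your proposal is correct and follows essentially the same route as the paper's own proof: the unique extension of each $P_{10}$ to a $16$-cycle, the lower bound $\frac18 \le \frac{|T|}{|S|}$, the count $|S| = |A|(q+1)q^{12}$ from the $A$-endpoint, and the identical lower bound for $|R|$ leading to the same contradiction as $q \to \infty$. Your explicit justifications (the $9+9-16=2$ count and the single-endpoint-in-$A$ observation that removes the factor of $\frac12$) are details the paper leaves implicit, and you also silently correct the paper's typographical $\frac16$ in its final inequality to the intended $\frac18$.
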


\begin{proof} Let $G$ be a Ree-Tits octagon with bipartition $(A,B)$.  In other words, $V(G) = A \cup B$ with each vertex of $A$ having degree $q+1$ and each vertex of $B$ having degree $q^2 + 1$.  Assume $\varphi$ is a PRCF edge coloring of $G$.  Let $S$ be the set of all $P_{10}$'s in $G$, with $R, T$ being the sets of rainbow $P_{10}$'s and non-rainbow $P_{10}$'s, respectively, with respect to $\varphi$.  By the same reasoning displayed in the proof of Theorem \ref{hexagon}, each $P_{10}$ in $G$ is a subgraph of exactly one 16-cycle in $G$.  Since no cycle in $G$ is rainbow, any 16-cycle has at least two edges colored the same color under $\varphi$, and it follows that, for any 16-cycle in $G$, at least two of the sixteen $P_{10}$'s that are subgraphs of that 16-cycle are elements of $T$.  Hence $\frac18 \leq \frac{|T|}{|S|}$.

Note that any $P_{10}$ in $G$ has one of its two end vertices in $A$ and the other in $B$, and furthermore, any path must alternate between vertices of $A$ and vertices of $B$.  An expression for the order of $S$ is therefore given by $|S| = |A|(q+1)(q^2)(q)(q^2)(q)(q^2)(q)(q^2)(q) = |A|q^{12}(q+1)$.

As in the proof of Theorem \ref{hexagon}, we find an upper bound for $|T|$ by first finding a lower bound for $|R|$.  Here, we have $|R| \geq |A|(q+1)(q^2)(q-1)(q^2 - 2)(q-3)(q^2 - 4)(q-5)(q^2 - 6)(q-7)$.  Our upper bound for $|T|$ is given by Inequality \ref{eq3}.     

\begin{equation}
|T| \leq |S| - |A|(q+1)(q^2)(q-1)(q^2 - 2)(q-3)(q^2 - 4)(q-5)(q^2 - 6)(q-7)
\label{eq3}
\end{equation} 

After dividing both sides by $|S|$ and then simplifying, we may combine Inequality \ref{eq3} with our lower bound for $\frac{|T|}{|S|}$ to obtain Inequality \ref{eq4}.  

\begin{equation}
\frac16 \leq \frac{|T|}{|S|} \leq 1 - \frac{(q-1)(q^2-2)(q-3)(q^2-4)(q-5)(q^2-6)(q-7)}{q^{10}}
\label{eq4}
\end{equation}

The right-hand side of Inequality \ref{eq4} goes to 0 as $q \to \infty$, and since $q$ could be taken as any odd power of 2, we do indeed have a contradiction.\qed
\end{proof}

\section{Looking Ahead}

From the results of the previous two sections, we have that a PRCF-bad graph of girth $g$ exists for all $g \in \{3,4,5,6,8,12,16\}$.  We suppose that one could trivially fill in the gaps in this statement by, for each $n \in \{7, 9, 10, 11, 13, 14, 15\}$, letting $G$ be a disconnected graph with two components, one being a PRCF-bad Ree-Tits octagon and the other being the cycle $C_n$.  This feels like a cheat, though.  We are still drawn, however, to the question of determining whether or not PRCF-bad graphs exist of arbitrarily large girth.  In this section, we will put together a few scattered thoughts on how an attack on this problem may possibly proceed.    

With regard to the proofs of Theorems \ref{hsgtheorem}, \ref{hexagon}, and \ref{octagon}, the most convenient aspect of the Hoffman-Singleton graph and the generalized polygons was the fact that, in those respective graphs, all paths of a certain length could be uniquely extended to minimum cycles.  Unfortunately, there are no other Moore graphs or thick generalized polygons to look at as we try to find PRCF-bad graphs of girth greater than 16.  Other generalized polygons -- that is, those that are not thick -- are not candidates for consideration, which we will demonstrate in Theorem \ref{notthicktheorem}.  Its proof will use Lemma \ref{notthicklemma}, which can be seen with proof in \cite{godsil}.  Note that a \textit{$k$-fold subdivision} of a graph $G$ is a graph $G'$ formed by replacing each edge of $G$ with a path of length $k+1$.  One could visualize this process of $k$-fold subdivision as simply placing $k$ vertices of degree 2 on each edge of $G$.  For reference, a graph $G$ along with its 2-fold subdivision $G'$ is given in the figure below. 

\begin{center}
\includegraphics[scale=0.65]{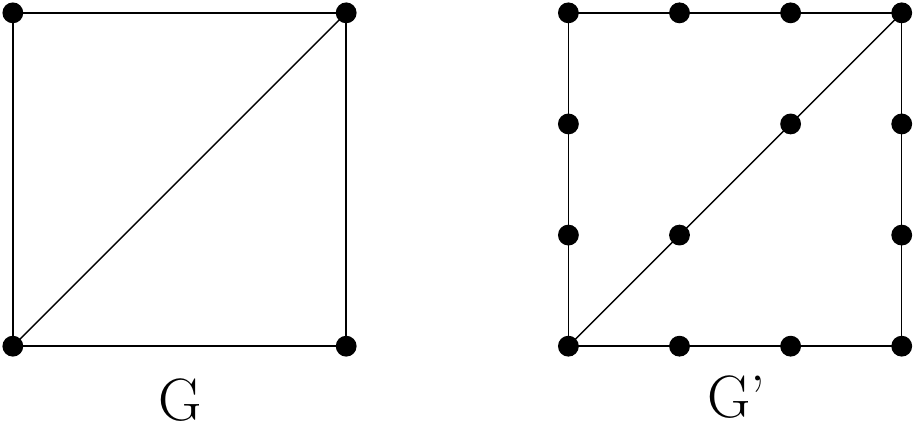}
\end{center}

\begin{lemma} \label{notthicklemma} A generalized polygon that is not thick is either an even cycle, the $k$-fold subdivision of a multiple edge, or the $k$-fold subdivision of a thick generalized polygon.
\end{lemma}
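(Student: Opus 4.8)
The plan is to show that deleting the degree-two vertices of a non-thick generalized polygon collapses it onto one of the three listed shapes. Throughout, let $G$ be a generalized polygon with $diam(G)=d$ and $g(G)=2d$, and assume $G$ is not thick. I would first record the standard metric facts about generalized polygons the argument leans on, all of which appear in \cite{godsil}: $G$ is connected and has minimum degree at least $2$; $G$ is bipartite; any two vertices at distance less than $d$ are joined by a unique geodesic; and any cycle of length $2d$ in $G$ is isometrically embedded, so that antipodal pairs on it lie at distance exactly $d$ in $G$. (For flavor: a pendant vertex $v$ at $u$ would sit at distance $d+1$ from the antipode of $u$ on a $2d$-cycle through $u$, contradicting $diam(G)=d$, which is why minimum degree $2$ is forced.)

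Since $G$ is not thick and has minimum degree $2$, its non-thick vertices are exactly the vertices of degree $2$. If $G$ has no thick vertex, then $G$ is a single cycle $C_n$, and $diam(C_n)=\lfloor n/2\rfloor=d$ together with $g(C_n)=n=2d$ forces $n=2d$; this is the even-cycle case. So assume $G$ has a thick vertex, and let $H$ be the multigraph obtained by suppressing every degree-two vertex of $G$: the vertices of $H$ are the thick vertices of $G$, and its edges are the maximal ``suspended paths'' of $G$, i.e. paths whose internal vertices all have degree $2$ and whose endpoints are thick. Then $G$ is recovered from $H$ by replacing each edge $e$ of $H$ with an internally disjoint path of some length $\lambda(e)\ge 1$; every vertex of $H$ has degree at least $3$; the cycles of $G$ correspond bijectively to the cycles of $H$, a cycle $Z$ of $H$ lifting to a cycle of $G$ of length $\sum_{e\in Z}\lambda(e)$; and for thick vertices $x,y$ the distance $d_G(x,y)$ equals the $\lambda$-weighted distance from $x$ to $y$ in $H$.

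I would then prove three things about $H$. First, $H$ has no loops: a loop at a thick vertex $w$ is a suspended cycle $C$ with $|C|\ge g(G)=2d$, its antipode on $C$ is a degree-two vertex $m$ with $d_G(w,m)=\lfloor|C|/2\rfloor\ge d$, and a third edge at $w$ then produces a vertex at distance $>d$ from $m$, contradicting $diam(G)=d$. Second -- the crux -- all the lengths $\lambda(e)$ are equal, say to $k+1$: arguing by contradiction from two $H$-paths between the same pair of thick vertices with different $\lambda$-weights (in particular two suspended paths of different lengths sharing an endpoint), one splices partial paths, using the degree-$\ge 3$ condition at $H$-vertices to close them up, into a cycle of $G$ of length less than $2d$, or into two distinct geodesics of equal length between a pair of thick vertices at distance less than $d$, or into a pair of vertices at $G$-distance greater than $d$ -- each contradicting $g(G)=2d$, uniqueness of short geodesics, or $diam(G)=d$. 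Third, once all suspended paths have the common length $k+1$, $G$ is exactly the $k$-fold subdivision of $H$, so $g(G)=(k+1)\,g(H)$, and bipartiteness of $H$ keeps every suspended-path midpoint within distance $(k+1)\,diam(H)$ of everything, giving $diam(G)=(k+1)\,diam(H)$. Writing $e:=d/(k+1)$, we get $g(H)=2e$ and $diam(H)=e$; hence either $H$ has exactly two vertices -- necessarily joined by at least three parallel edges -- and $G$ is the $k$-fold subdivision of a multiple edge, or $H$ has at least three vertices, is simple (two suspended paths joining one pair of thick vertices, in the presence of a third thick vertex, force a cycle of $G$ shorter than $2d$ exactly as in the previous step), and satisfies $diam(H)=g(H)/2$ with minimum degree $\ge 3$, i.e. $H$ is a thick generalized polygon and $G$ is its $k$-fold subdivision.

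The main obstacle is the second step, equality of all suspended-path lengths; everything else is routine once the metric facts quoted at the outset are available. The difficulty there is combinatorial bookkeeping rather than a single clever idea: among the many cycles and geodesics present in $H$ one must exhibit, from two arcs of unequal $\lambda$-weight, a configuration guaranteed to assemble into a genuine cycle of $G$ with no repeated vertex and length less than the girth $2d$, or into two honest geodesics of equal length but different vertex sets, and making that selection robust in general is precisely where the degree-at-least-three condition at vertices of $H$ has to be invoked with care.
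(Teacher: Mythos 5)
The first thing to say is that the paper does not prove Lemma \ref{notthicklemma} at all --- it is quoted from \cite{godsil} --- so the real comparison is between your outline and the textbook argument, and in architecture they agree: suppress the degree-$2$ vertices, pass to the multigraph $H$ whose vertices are the thick vertices and whose edges are the suspended paths, rule out loops, show every suspended path has one common length $k+1$, and then identify $H$ as a multiple edge or a thick generalized polygon. The pieces at either end of your argument (the all-degree-$2$ case collapsing to $C_{2d}$, the no-loop argument via the midpoint of a suspended cycle, the two-vertex case giving a multiple edge, and the arithmetic $g(H)=g(G)/(k+1)$ and $diam(H)=diam(G)/(k+1)$) are sound.

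The gap is exactly where you say it is, and naming the gap does not close it. The assertion that all suspended paths have the same length is the entire mathematical content of the lemma --- it is what converts ``delete the degree-$2$ vertices'' into ``$G$ is a $k$-fold subdivision'' --- and your treatment of it consists of listing the three contradictions one hopes to reach (a cycle shorter than $2d$, two distinct equal-length geodesics between nearby vertices, or a pair at distance exceeding $d$) without producing, from two suspended paths of unequal length, any concrete configuration that yields one of them. As written there is nothing to verify: you do not say which two suspended paths are compared (two sharing a thick endpoint? two lying on a common girth cycle?), how the degree-$\ge 3$ condition supplies the extra arcs needed for the splice, or why the spliced closed walk is a genuine cycle rather than a degenerate walk. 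Until that step is executed, the proof establishes only the routine bookkeeping surrounding the one real claim. A secondary caution: you list bipartiteness of $G$ among the ``standard metric facts,'' but it does not follow from the paper's bare definition ($diam(G)=d$, $g(G)=2d$) --- the Gr\"{o}tzsch graph has diameter $2$ and girth $4$ and is not bipartite --- so the point in your third step where you need $g(H)$ to be even requires either an actual derivation in the non-thick setting or an explicit strengthening of the hypotheses, not just a citation.
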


\begin{theorem} \label{notthicktheorem} Let $G$ be a generalized polygon that is not thick.  Then $G$ is PRCF-bad if and only if $G$ is isomorphic to the complete bipartite graph $K_{2,n}$ for some $n \geq 4$.  
\end{theorem}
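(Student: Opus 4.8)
The plan is to use Lemma~\ref{notthicklemma} to reduce to three cases, and then analyze each separately, leaning on the observation that $k$-fold subdivision interacts very cleanly with PRCF colorings. First I would dispose of the even cycle $C_{2d}$: it is PRCF-good (color all but one edge with distinct colors and repeat a color on two non-incident edges), and it is not of the form $K_{2,n}$ for $n\ge 4$, so it contributes nothing. Next, the $k$-fold subdivision of a multiple edge (two vertices joined by $m\ge 2$ parallel edges) is precisely a theta-like graph: for $k=0$ with $m$ edges this is the ``book'' of $m$ internally disjoint paths of length $1$, i.e.\ just a multigraph, but since our graphs are simple the only simple instance is $K_{2,m}$ realized as the $1$-fold subdivision ($k=1$) of a double/triple/$\ldots$ edge — here the subdivided multiple edge on $m$ parallel edges becomes exactly $K_{2,m}$ with the two original endpoints as the size-$2$ side. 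So in this case $G \cong K_{2,m}$, and I would invoke the already-mentioned facts that $K_{2,4}$ is PRCF-bad, note the monotonicity (a PRCF-bad graph stays PRCF-bad when edges that lie in no new cycle structure are... actually better: show directly $K_{2,n}$ is PRCF-bad for all $n\ge4$ and PRCF-good for $n\le 3$, the latter being easy since $K_{2,2}=C_4$ and $K_{2,3}$ admit explicit colorings). For $k\ge 2$ the subdivided multiple edge has girth $2(k+1)\ge 6$ and is a union of long internally-disjoint paths between two hubs; I would show it is PRCF-good by a greedy argument: color each path almost-rainbow but arrange two equal colors on the first two edges of one fixed path, then every cycle (which consists of two of these paths glued at the hubs) contains that repeated pair, hence is non-rainbow.

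The crux is the third case: $G$ is the $k$-fold subdivision $H'$ of a thick generalized polygon $H$, with $k\ge 1$. The key claim I would isolate and prove is: \emph{for any graph $H$ and any $k\ge 1$, the $k$-fold subdivision $H'$ is PRCF-good.} Granting this, the third case is finished (such $H'$ is visibly not $K_{2,n}$ since it has minimum degree... well, its hubs have the degrees of $H$, $\ge 3$, and there are many hubs, so it is not $K_{2,n}$), and combined with the first two cases we conclude $G$ is PRCF-bad iff $G\cong K_{2,n}$, $n\ge 4$. To prove the claim: take any proper edge coloring of $H$ (e.g.\ with $\chi'(H)$ colors, or even assume it's already PRCF-bad — doesn't matter). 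For each edge $e=uv$ of $H$, the subdivision replaces $e$ by a path $u=p_0,p_1,\dots,p_{k+1}=v$ of length $k+1\ge 2$. I color the first edge $p_0p_1$ and, crucially, I want a single globally-repeated color to appear on two consecutive internal edges of \emph{every} subdivided path. Concretely: reserve one special color $\star$; on each subdivided path, since $k+1\ge 2$ there are at least two edges — but I need two \emph{non-incident} equal-colored edges or rather two edges of the same color inside the path so that path alone isn't rainbow? No: a single subdivided path is not a cycle. The right idea: every cycle $C$ in $H'$ is the subdivision of a closed walk in $H$ that uses at least $g(H)\ge 3$ edges, hence $C$ passes through at least one full subdivided path; if I make each subdivided path internally non-rainbow (two of its $k+1\ge 2$ edges share a color — possible exactly when $k+1\ge 2$, choosing the two end-segments if $k+1=2$ forces them incident, so I instead need $k+1\ge 3$... ). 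This subtlety — whether $k=1$ causes trouble — is where I expect to spend real effort.

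For $k=1$ (each edge becomes a path of length $2$, so $H' $ is the subdivision graph $S(H)$), the two edges of a subdivided edge are incident, so I cannot repeat a color within one subdivided edge under a \emph{proper} coloring. Here I would argue differently: in $S(H)$ every cycle has even length $\ge 2g(H)\ge 6$ and corresponds to a cycle $c$ of $H$; I want two edges of the same color on this cycle of $S(H)$. Pick a proper edge coloring of $H$ that is itself non-PRCF-forbidding is useless; instead I color $S(H)$ as follows: orient things so that each subdivided edge $u$–$m_e$–$v$ gets its $u$-side edge colored by a ``vertex color'' $\chi(u)$ coming from a proper \emph{vertex} coloring of $H$ composed... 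Actually the clean move: a proper edge coloring of $S(H)$ corresponds to assigning to each (vertex, incident original-edge) pair a color; at the midpoint $m_e$ the two edges just need different colors, at an original vertex $u$ all incident edges need distinct colors. I claim we can 2-color cleverly so that every original cycle of $H$ uses a repeated color in $S(H)$: use a proper edge coloring $c$ of $H$ with colors $1,\dots,\chi'(H)$, and in $S(H)$ give \emph{both} halves of subdivided edge $e$ the ``base color'' slot of $c(e)$ but tagged by which end (so colors are pairs $(c(e),\text{side})$) — then properness at $m_e$ holds (two sides differ), properness at $u$ holds (the $c$-values differ). Now a cycle of $H$ with edges $e_1,\dots,e_\ell$: its image in $S(H)$ uses colors $(c(e_i),\text{in}),(c(e_i),\text{out})$; since $\ell\ge g(H)\ge 3$ and $c$ is proper on the cycle, if any color repeats among $c(e_1),\dots,c(e_\ell)$ we may be done, but a proper edge coloring of an odd cycle needs $3$ colors and can have all $e_i$ distinct — so this alone fails, and I would instead perturb one chosen subdivided edge $e_0$ to have both halves forced into a color already used elsewhere, making every cycle through $e_0$ non-rainbow, then induct/recurse on $H\setminus e_0$ to handle cycles avoiding $e_0$ (they live in a proper subgraph, but that subgraph's subdivision may fail to be a generalized polygon — yet we only need it PRCF-good, which we can get by iterating edge-deletion until no cycles remain). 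Assembling this careful recursion — and checking it genuinely yields a \emph{single} coloring of all of $S(H)$, not incompatible local ones — is the main obstacle; once it goes through, the theorem follows immediately by combining the three cases.
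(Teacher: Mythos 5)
Your reduction via Lemma~\ref{notthicklemma} is the same as the paper's, and the even-cycle and $K_{2,n}$ endpoints are fine, but there is a genuine gap at exactly the point you flag as ``the main obstacle'': the $1$-fold subdivision of a thick generalized polygon is never actually shown to be PRCF-good. The ideas you float do not close it. Giving both halves of a chosen subdivided edge $e_0$ ``a color already used elsewhere'' does not make cycles through $e_0$ non-rainbow --- the repeated color must land on the \emph{same} cycle, not merely somewhere in the graph --- and the proposed recursion on $H\setminus e_0$ is not shown to assemble into a single globally proper coloring. Note also that your isolated key claim (``for any graph $H$ and any $k\ge 1$, the $k$-fold subdivision is PRCF-good'') is false as stated: the $1$-fold subdivision of a multigraph with $n\ge 4$ parallel edges is $K_{2,n}$, which is PRCF-bad, so any correct argument for $k=1$ must exploit structure beyond mere subdivision. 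The paper does exactly this: it takes a breadth-first search tree of the thick polygon $P$ rooted at $v$ with levels $L_0,\dots,L_d$, and colors with one color all edges $z_{x,y}y$ with $x\in L_i$, $y\in L_{i+1}$, $i\le d-2$; this is proper because girth $2d$ forces each vertex at level $\le d-1$ to have a unique neighbor in the previous level, and it kills every rainbow cycle meeting $L_0\cup\cdots\cup L_{d-2}$, after which one deletes those levels and iterates with a fresh color. Nothing in your sketch substitutes for this.

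A secondary flaw: for the $k$-fold subdivision of a multiple edge with $k\ge 2$, you propose ``two equal colors on the first two edges of one fixed path.'' The first two edges of a path are incident, so this is not a proper coloring; and even repaired to two non-incident edges on one fixed path, a cycle formed from two \emph{other} parallel paths avoids that path entirely and can still be rainbow. The correct move (which the paper applies uniformly to every $k$-fold subdivision with $k\ge 2$) is to place one red edge in the \emph{interior} of \emph{every} subdivided path: these red edges are pairwise non-incident since $k\ge 2$, and every cycle traverses at least two full subdivided paths, hence contains at least two red edges.
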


\begin{proof} Lemma \ref{notthicklemma} categorizes those generalized polygons that are not thick.  Even cycles are PRCF-good.  Note that if $H$ is any graph, and $G$ is a $k$-fold subdivision of $H$ for some $k \geq 2$, then $G$ is guaranteed to be PRCF-good.  To see this, for each path of length $k+1$ in $G$ that replaced an edge of $H$, choose two adjacent vertices of degree 2, and color the edge containing them red.  In any proper edge coloring of $G$ which has those edges colored red, all cycles will have two or more red edges.

Now suppose that $G$ is a 1-fold subdivision of a thick generalized polygon $P$.  Let $diam(P) = d$ and $g(P) = 2d$.  Consider a breadth-first search tree of $P$ rooted at some $v \in V(P)$.  Label the levels of this tree as $L_0, \ldots, L_d$ where $L_0 = \{v\}$, $L_1 = N(v)$, $L_2 = N(L_1) \setminus \{v\}$, and so on.  For any edge $xy$ in $P$, let $z_{x,y} \in V(G)$ be the vertex of degree 2 that is adjacent to each of $x,y$.  Consider all edges of the form $z_{x,y}y$ where $x \in L_i$ and $y \in L_{i+1}$ with $i \in \{0, \ldots, d-2\}$.  Place a partial edge coloring on $G$ by coloring these edges all the same color, say $c_1$.  This partial coloring is proper as none of these edges are incident, and it guarantees that, should it be extended to a full edge coloring of $G$, no vertex of $L_0 \cup \cdots \cup L_{d-2}$ will be contained by a rainbow cycle in $G$.  We may then delete from $G$ all vertices of $L_0 \cup \cdots \cup L_{d-2}$ to form a new graph $G'$.  We then choose a new $v \in V(G')$ and repeat the above process, this time using color $c_2$.  Continuing in this manner, we see that no vertex of $G$ will appear in a rainbow cycle.  

Finally, note that if $G$ is a 1-fold subdivision of a multiple edge, $G$ is isomorphic to $K_{2,n}$.  It is seen in \cite{hoffmanetal} that $K_{2,n}$ is PRCF-bad if and only if $n \geq 4$.\qed

\end{proof}

So where to look now?  Ideally, we would be able to adapt the line of proof used in Theorems \ref{hsgtheorem}, \ref{hexagon}, and \ref{octagon} to find PRCF-bad graphs of larger girth, and it seems a good bet to focus our attention on the family of graphs known as cages.  An $(r,g)$-\textit{cage} is defined to be a graph of minimum order among all $r$-regular graphs of girth $g$.  Such graphs are shown to exist for all $r \geq 2$, $g \geq 3$ in \cite{sachs}.  In the literature, the order of an $(r,g)$-cage is typically denoted $n(r,g)$.  See \cite{exoosurvey} for an expansive history.

The method of proof used in Theorems \ref{hsgtheorem}, \ref{hexagon}, and \ref{octagon} essentially consisted of two parts.  Once it was assumed that a graph $G$ had been given a PRCF edge coloring $\varphi$, we defined $S$ to be the set of all paths $P_k$ in $G$ of a certain length, and let $T \subset S$ contain all non-rainbow paths.  If this method is to be generalized, note that $k$ would be equal to $\frac{g+3}{2}$ when $g$ is odd and $\frac{g+4}{2}$ when $g$ is even.  Our two tasks were to find lower and upper bounds for the order of $T$, eventually reaching a contradiction.  If $G$ is a cage, it is by definition regular, so an upper bound for the corresponding $|T|$ should be relatively easy to find.  When $G$ was a Moore graph or a generalized polygon as in Theorems \ref{hsgtheorem}, \ref{hexagon}, and \ref{octagon}, it was straightforward to find a lower bound for $|T|$, with the reason being that every path $P_k$ was a subgraph of exactly one minimum cycle in $G$.  If instead $G$ is a cage that is not a Moore graph or generalized polygon, this will not be the case.  Each $P_k$ will be a subgraph of at most one minimum cycle, but there will be some of these paths that are not subgraphs of any minimum cycles at all.  That said, since an $(r,g)$-cage has the smallest order among all $r$-regular graphs of girth $g$, it may be that such a graph $G$ ends up having a large enough ``threshold" of minimum cycles that, should we be able to count them, a contradiction may still present itself regarding the lower and upper bounds for $|T|$.         

Unfortunately, there are a number of issues which must be addressed for the above plan to work.  First of all, not many cages are actually known.  The value $n(r,g)$ is precisely given when there exists a Moore graph or a generalized polygon that is $r$-regular with girth $g$, and $n(r,g)$ has also been determined for a few selections of small $r,g$.  However, at this point in our search for PRCF-bad graphs of higher girth, we require $g \geq 17$, and we also need $r \geq g - 1$.  The reason for this latter condition is that a graph $G$ which is $r$-regular with $r \leq g - 2$ is automatically PRCF-good.  By Vizing's Theorem, $\chi'(G) = r \text{ or } r + 1$, and so $G$ could be properly colored with fewer than $g(G)$ colors, thus guaranteeing that no cycle is rainbow.  At present, determination of $n(r,g)$ for any $r \geq 16$, $g \geq 17$ seems beyond the ability of the mathematical community.  

In the literature, various upper bounds are given for $n(r,g)$ for general $r,g$.  The survey \cite{exoosurvey} gives a detailed compilation of many.  Our second source of difficulty comes from the fact that the methods used to describe the graphs realizing these upper bounds are algebraic and are typically non-constructive.  It seems quite difficult to count the number of minimum cycles in such graphs.  We leave these questions for future work. 

\begin{question} \label{q1} In terms of $r$ and $g$, what is the maximum number of $g$-cycles among all $(r,g)$-cages? 
\end{question} 

A precise answer to Question \ref{q1} seems well beyond our current reach, but even a lower bound on the number of distinct $g$-cycles that can be found in an $(r,g)$-cage may be of use in finding PRCF-bad girths of higher order.  

\begin{question} \label{q2} Suppose $G$ is an $(r,g)$-cage.  Let $S$ be the set of all paths $P_k$ in $G$ where $k = \frac{g+3}{2}$ if $g$ is odd and $k = \frac{g+4}{2}$ if $g$ is even.  Let $S_0 \subset S$ be the set of all $P_k$ that appear as a subgraph of some $g$-cycle in $G$.  As $r \to \infty$, is is true that $\frac{|S_0|}{|S|} \to b$ for some $b > 0$?   
\end{question}

We close by remarking that if Question \ref{q2} could be answered in the affirmative, it would likely confirm the existence of PRCF-bad graphs of arbitrarily large girth.


\end{document}